\newtheorem{theorem}{Theorem}
\newtheorem{lemma}{Lemma}
\begin{document}
\title{Convergence of Rain Process Models to Point Processes}

\author{Scott Hottovy}
\affiliation{Department of Mathematics, United States Naval Academy, Annapolis, Maryland}
\author{Samuel N. Stechmann}
\affiliation{Department of Mathematics, and Department of Atmospheric and Oceanic Sciences, University of Wisconsin–Madison,
Madison, Wisconsin}



\begin{abstract}
A moisture process with dynamics that switch after hitting a threshold gives rise to a 
rainfall process. This rainfall process is characterized by its 
random holding times for dry and wet periods. On average, the holding times for the wet periods are much shorter than the dry. Here convergence is shown for the rain fall process to a point 
process that is a spike train. The underlying moisture process for the point process is a 
threshold model with a teleporting boundary condition. This approximation allows simplification of the model with many exact formulas for statistics. The convergence is shown by a Fokker-Planck derivation, convergence in mean-square with respect to continuous functions, of the moisture process, and convergence
in mean-square with respect to generalized functions, of the rain process.
\end{abstract}

\maketitle

\section{Introduction}

Rain processes are often studied and tested as single column models in the atmosphere \cite{bm86,fmp04}. These 
single column models are used to ``trigger'' a transition to convection or rainfall. Furthermore, these rainfall models are simple and have uses in global climate models (GCMs) \cite{ln00,sz14,bkj17}. One type of rainfall model that has been successful  is a Renewal process (\cite{bmr07,fl87,g64,svb98}). A renewal process is a continuous-time Markov process that is 
defined by its holding times \cite{c62}. For example, a simple rain renewal process, $\sigma(t)$, 
may be defined by its holding times $\tau^d$ when $\sigma(t)=0$, defined to be that the column of air is
dry, and $\tau^r$ when $\sigma(t) = 1$, defined to be that the column of air is raining. Thus a dry (or rain) event will last $\tau^d$ (or $\tau^r$) time where the distribution of the (possibly) random
times are given.

The model presented here was previously studied in \cite{sn11,sn14,hs15siap,asn16}.  
The underlying process is a one dimensional continuous-time stochastic process 
modeling the moisture $q(t)\in(-\infty,\infty)$ for $t\geq0$, typically in 
cm, for a parcel of 
air. This quantity is vertically integrated and averaged over a square domain to 
give the units of length. Here the rain process is modeled as $\sigma(t)$ an indicator function for rain. One choice of process for $q(t)$ is the hysteresis dynamics
studied in \cite{hs15siap}. There the $q(t)$ process is governed 
by the stochastic differential equations (SDE)
\begin{equation}
    \label{eq:D2}
    dq(t) = \left \{\begin{array}{lr}
    m\;dt + D_0 \;dW_t & \mbox{for }\sigma(t) = 0 \\
    -r\;dt + D_1 \;dW_t & \mbox{for }\sigma(t) = 1 
    \end{array}\right . , \quad q(0) = 0, \quad \sigma(0) = 0,
\end{equation}
where $m$ and $r$ are the moistening and rain rates respectively, and 
$D_0$ and $D_1$ are the fluctuations of moisture during the respective
states. The dynamics of $\sigma(t)$ switch from 0 to 1 when $q(t)$ reaches
a fixed threshold $b$. That is $(q(0),\sigma(0))=(0,0)$ and 
$\sigma(t) = 1$ for $t=\inf\{t\geq0, q(t)=b\}$. Then $\sigma(t)$ switches
back to zero when $q(t)$ reaches the threshold $q(t)=0$. This 
model is referred to as the deterministic trigger, two threshold (D2) model in \cite{hs15siap}. A realization of the processes $q(t)$ and $\sigma(t)$ are shown in Figure \ref{fig:D2}. In a renewal process perspective, $\sigma(t)$ is modeled by the holding times $\tau_d$ and $\tau_r$. These times 
are the first passage times for Brownian motion with drift to travel 
a distance of $b$ units. 

Models of triggering precipitation are used for convective parameterizations. Convective parameterizations are useful in global climate and circulation 
models (GCMs). In GCMs, the models are run for long times to 
examine potential effects, for example, of climate change. In long times, 
the model \eqref{eq:D2} was studied in \cite{asn16}. In a long time view,
the dry events dominate the rain events. In Figure 2 the top panel shows 
$\sigma(t)$ process is plotted up to time $T=...$, and in the bottom panel
an example of the rain process in observations (OBS DETAILS). Thus for 
long times, $r$ is a much larger rate than $m$, and $\sigma(t)$ resembles 
a point process.

\begin{figure}
	\begin{center}
			\includegraphics[scale=0.25,angle=0]{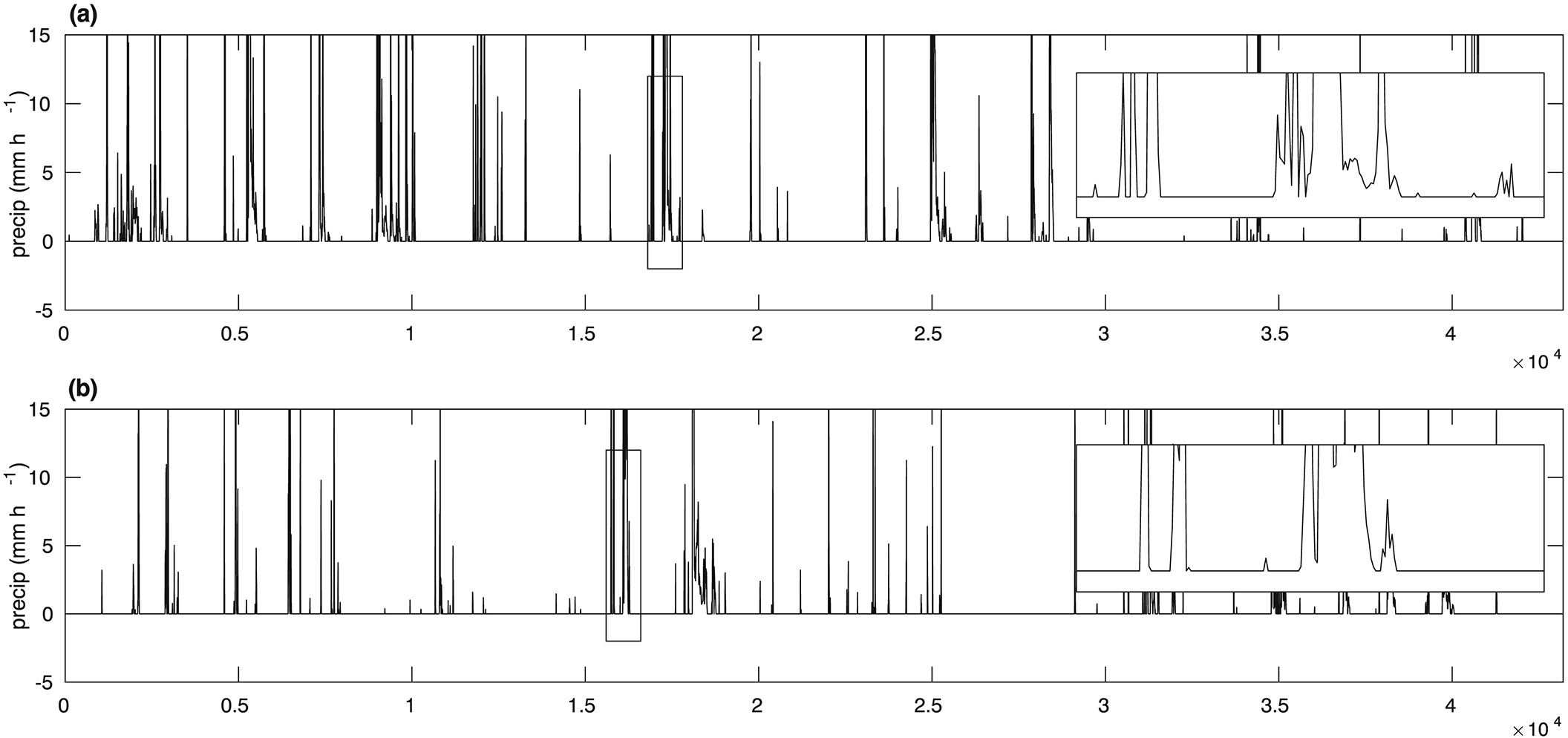}
			
			\includegraphics[trim=3.25cm 19cm 3.25cm 2cm,clip,width=1\textwidth]{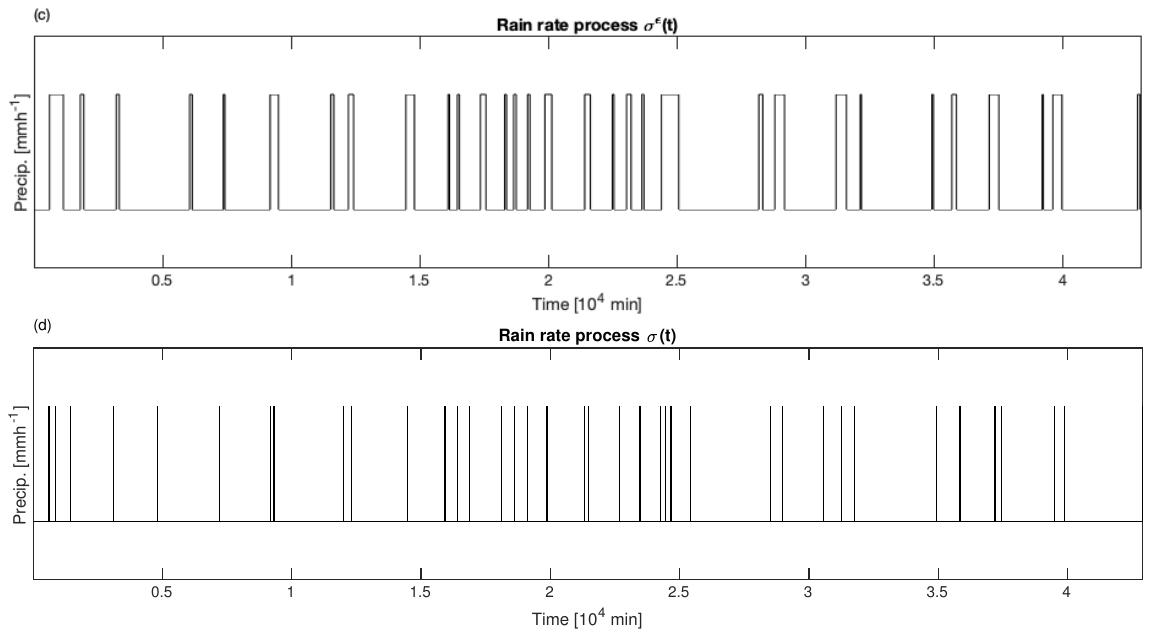}
	\end{center}
	\caption{Sample precipitation time series from observations at (a) Manus Island and (b) Nauru Island reproduced from Fig 3. of \cite{asn16} with permission from the authors. Panels 
	(c) and (d) are realizations of the model of the rain rate process $\sigma^\epsilon(t)$ with finite rain rate $r$ and $\sigma(t)$ the point process, respectively.}
		\label{fig:D2}

\end{figure}

The main result of the paper is to define and show convergence of the threshold model above as $r\to\infty$. For example, 
on the level of renewal processes, $\tau^r\to0$ and thus
$\sigma(t)$ converges to a process that is zero everywhere and
has spikes at infinity at random times $\tau^d$. However, $\sigma(t)$ is right continuous and has left hand limits, where as the spike train is not. Thus the mode of convergence is not clear. For $q(t)$ the limit is also unclear, but will be redefined in a way to show convergence with respect to the topology on continuous functions with the uniform metric. In this study, the limiting processes are define (in Section \ref{sec:models}) and convergence is shown both heuristically (for the Fokker-Planck equation) and rigorously.

There are many novel aspects of this work. The limit jump process $q(t)$ has an associated Fokker-Planck equation that 
is derived using a matched asymptotic method. The resulting Fokker-Planck equation has a peculiar boundary flux condition which defines a ``teleporting'' boundary condition of $q(t)$.
The processes are decoupled into evaporating and
precipitating processes. Only after this decoupling can convergence of the evaporation 
processes be shown rigorously with respect to the uniform metric 
on the space of continuous functions. Finally, the rain process $\sigma(t)$ is shown to converge  
rigorously with respect to the the generalized function space. This proof shows convergence of a renewal process to a delta process. Further more, the proof shows what kinds of bounds 
the rain event times $\tau^{r}$ need in order 
for integrated convergence to hold. 

The convergence results shown here have the potential to 
impact various other fields. Many fields of study use 
similar renewal processes to model phenomena. The connections 
to rain models was made above. In addition, there has been 
much work in queuing theory to approximate point processes 
with renewal processes (e.g. \cite{w82,b94}), and using threshold triggers in financial models \cite{lp19}. The strongest connection is with neuron stochastic integrate and fire models (see \cite{sg13} for a review). The moisture process with a finite rain rate is similar to a Wiener Process model of a single Neuron with refractoriness. A similar model was studied in \cite{aetal08} where the refractory time was constant. Here, 
the refractory time is random and coincides with the rain duration time $\tau^{r}$. Thus the work here is applicable to understanding the differences in using a model without refractoriness versus a model with a short, possible random, refractory time. 
The structure of the paper is as follows. The processes for moisture and rain are defined in Section \ref{sec:models}. The modes of convergence are discussed in 
Section \ref{sec:convergence}. The heuristic derivation of the Fokker-Planck equation
is shown in Section \ref{sec:FP}. Rigorous convergence of the moistening process
$E^\epsilon$ to $E$ is shown with respect to $L^2$ in Section \ref{sec:PathConvergence} and the rain process $\sigma^\epsilon$ is shown to converge to the sum of delta 
distributions $\sigma$ with respect to generalized functions in Section \ref{sec:MeanSquare}. The results are summarized in Section \ref{sec:conclusions}.

\section{Model Description}
\label{sec:models}

In this section the moisture and precipitation processes are defined. First the underlying 
moisture process of the renewal rain process is defined. The processes are defined with 
a small parameter $\epsilon$ with the limit as $\epsilon\to0$ in mind. 

The moisture 
process $q^\epsilon(t)\in\mathbb{R}$ is defined as the solution to the stochastic 
differential equation (SDE), 
\begin{equation}
    \label{eq:D2}
    dq^\epsilon(t) = \left \{\begin{array}{lr}
    m\;dt + D_0 \;dW_t & \mbox{for }\sigma^\epsilon(t) = 0 \\
    -\frac{r}{\epsilon}\;dt + D_1 \;dW_t & \mbox{for }\sigma^\epsilon(t) = 1 
    \end{array}\right . , \quad q(0) = 0, \quad \sigma^\epsilon(0) = 0,
\end{equation}
where $m$ and $r/\epsilon$ are the moistening and rain rates, and 
$0<D_0\leq D_1$ are the fluctuations of moisture during the respective
states. The rain process, $\sigma^\epsilon(t)\in\{0,r/\epsilon\}$ are as follows: since $\sigma^\epsilon(0)=0$, let $\tau_1^{d,\epsilon} \equiv \inf\{t>0| q^{\epsilon}(t) = b\}$. Then $\sigma^\epsilon(t)=0$ for $t\in[0,\tau_1^d)$. Next let $\tau_1^r \equiv \inf\{t>\tau_1^d|q^\epsilon(t)=0\}$, and $\sigma^\epsilon(t)=r/\epsilon$ for $t\in[\tau_1^d,\tau_1^r)$. This 
process repeats up to an arbitrary final time $T$.

The associated processes, as $\epsilon\to0$, are defined as $q(t)$ and $\sigma(t)$ for 
the moisture and rain processes. The moisture process is the solution to the SDE, 
\begin{equation}
    \label{eq:qeq}
    dq(t) = m\;dt + D_0\;dW_t, \quad q<b, \quad q(0) = 0, 
\end{equation}
with the unusual boundary condition as follows: Let the usual stopping time be as follows $\tau_1^d=\inf\{t>0|q(t)=b\}$. Then at time $t>\tau_1^d$ the process $q(t)$ jumps or ``teleports'' to $q=0$. Thus 
\begin{equation}
    \lim_{t\rightarrow (\tau_1^d)^-} q(t) = b, \quad \lim_{t\rightarrow (\tau_1^d)^+} q(t) = 0, \quad q(\tau_1^d) = b. 
\end{equation}
Then the process starts over using the dynamics of \eqref{eq:qeq} until 
$\tau_2^d=\inf\{t>\tau_1^d|q(t)=b\}$, and the process repeats. The stopping
times $\tau_i^d$ are the dry event times of the process. 
These dynamics arise from the heuristic Fokker-Planck derivation in the 
next section (see Section \ref{sec:FP}). 

Examples of the processes are shown in Figure \ref{fig:qExamples}. The processes with finite rain rate $r/\epsilon$ for $\epsilon>0$ are shown in panels (a) and (b). Panel (a) is the 
moisture process $q^\epsilon(t)$ defined is equation~\eqref{eq:D2}. The rain rate process is shown in (d) and takes value $r/\epsilon$ when $q^{\epsilon}(t)$ reaches level $b$ for the first time (panel (a) in black) and resets to 
zero when $q^\epsilon(t)$ reaches zero (panel (a) in gray). This process repeats. The limiting processes are shown in panels (c) and (d). Panel (c) shows the limiting moisture process $q(t)$ defined in equation \eqref{eq:qeq} and panel (d) shows the rain process defined in equation \eqref{eq:sigma}. The moisture process is a Brownian motion with positive drift until 
reaching level $b$. When $q(t)=0$, the process $\sigma(t)$ takes an infinite value and the moisture process is reset at zero. 

\begin{figure}
	\begin{center}
	\includegraphics[clip,width=.75\textwidth]{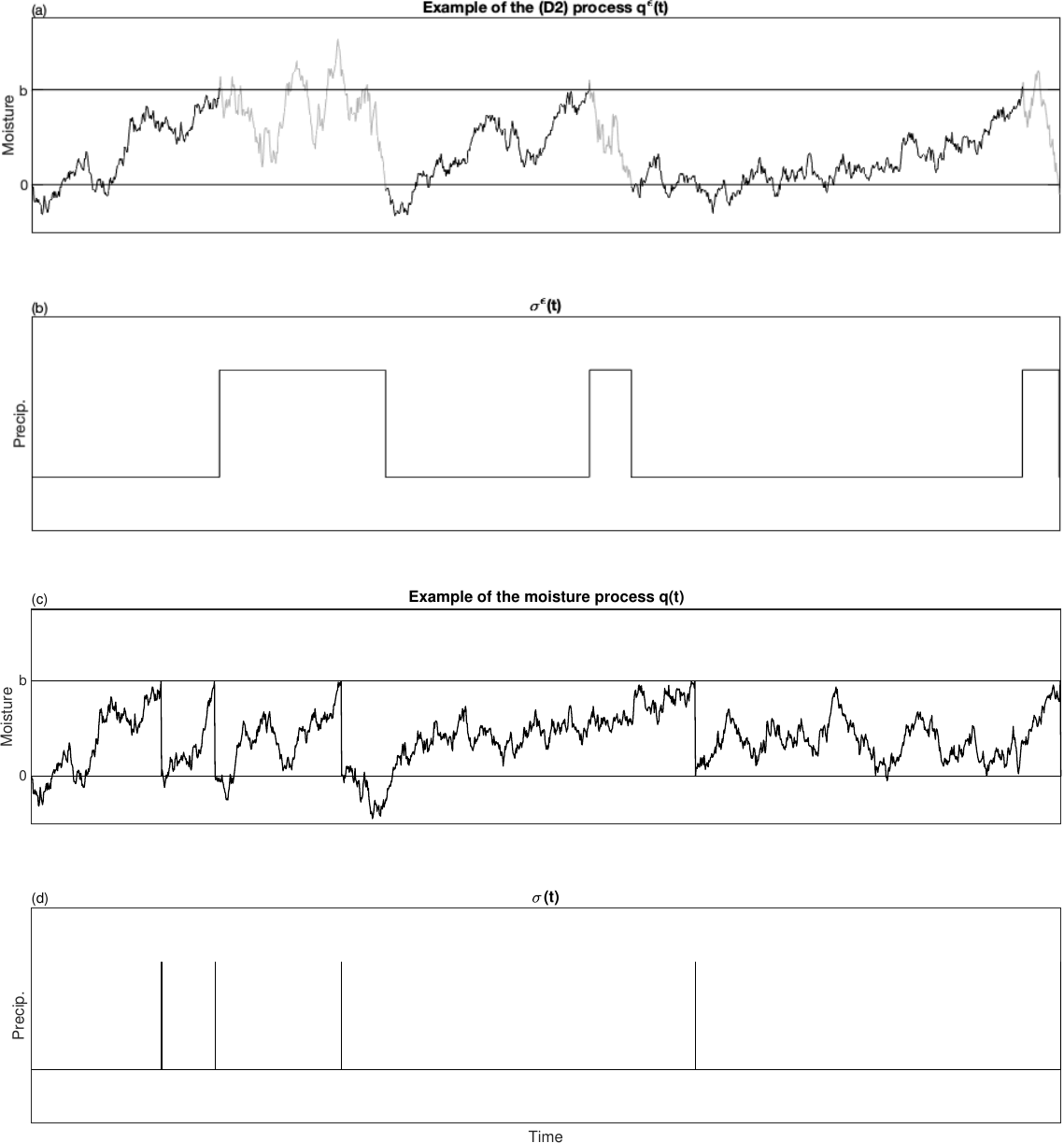}
	\end{center}
	\caption{Realizations are plotted of the processes (a) $q^\epsilon(t)$, (b) $\sigma^{\epsilon}(t)$ for rain rate $r/\epsilon$ defined in equation \eqref{eq:D2} with $\epsilon>0$ and (c) $q(t)$ and (d) $\sigma(t)$ defined in equation \eqref{eq:qeq} and \eqref{eq:sigma} respectively.}
		\label{fig:qExamples}
\end{figure}

From the definition of $\tau_i^d$ above, the rain point process $\sigma(t)$ 
is defined as
\begin{equation}
    \label{eq:sigma}
    \sigma(t) = b\sum_{i=1}^{\mathcal{N}(T)} \delta(t-\tau_i^d),
\end{equation}
where $\mathcal{N}(T)$ is the random variable of the number of times the process $q(t)$ reaches $b$ in time $T$. The quantity $b$ arises because the moisture process $q^\epsilon$ loses moisture at a rate of $r/\epsilon$ per time, on average. The moisture process $q(t)$ loses all the moisture built up ($b$) instantaneously. 

Note that $q^\epsilon(t)$ has continuous paths while $q(t)$ 
has jump discontinuities. Thus any mode of convergence between $q^\epsilon$ and $q$ with an associated metric (e.g. uniform or Skorohod) will fail. There
is another way to define both $q^\epsilon$ and $q$ in which convergence 
with respect to $L^2$ with the uniform metric on the space of continuous functions ($\mathcal{C}[0,T]$) can be shown. To do so, $q^\epsilon(t)$ is decomposed into an evaporation process, $E^\epsilon(t)$, and precipitating
process $P^\epsilon(t)$. These processes are defined as, 
\begin{equation}
\label{eq:EPdef}
	dE_t^\epsilon = \left \{ \begin{array}{lr}
	m\;dt + D_0dW_t & \mbox{for }\sigma_t^\epsilon=0\\
	0 & \mbox{for }\sigma_t^\epsilon=1
	\end{array} \right . ,
	\quad \mbox{ and }\quad
	dP_t^\epsilon = \left \{ \begin{array}{lr}
	0 & \mbox{for }\sigma_t^\epsilon=0 \\
		-\frac{r}{\epsilon}\;dt + D_0dW_t & \mbox{for }\sigma_t^\epsilon=1
	\end{array} \right . .
\end{equation}
Thus the moisture process $q^\epsilon(t)$ is written as 
$$ q^\epsilon(t) = E^\epsilon(t)+P^\epsilon(t). $$
In the limit, the jumps will be captured in the $P^\epsilon$ process. In the following section it will be shown (see Section \ref{sec:PathConvergence}) that $E^\epsilon\rightarrow E$, where 
$E(t)$ is defined as the solution to the SDE
\begin{equation}
    \label{eq:Edef}
    dE(t) = m\;dt + D_0\;dW_t, \quad E(0) = 0. 
\end{equation}
Furthermore, the $\sigma$ process is defined as above in \eqref{eq:sigma} 
where $\tau_i^d=\inf\{t>0|E(t) = kb, \; k\in\mathbb{N}\}$, i.e. the first passage time of Brownian motion with drift to $kb$.

\section{Convergence to a Point Process}
\label{sec:convergence}

In this section convergence is shown both heuristically (e.g. 
Section \ref{sec:FP}) and rigorously (e.g. Sections \ref{sec:MeanSquare} and \ref{sec:PathConvergence}). 

Note that the simplest ideas of convergence break down when considering path-wise convergence of $q^\epsilon$ to $q$ and $\sigma^\epsilon$ to $\sigma$. This is because $q^\epsilon$ is a continuous process ($\sigma^\epsilon$ is left continuous and has right hand limits) for all $\epsilon>0$ and $q$ is a process with jumps ($\sigma$ no longer is left continuous). Thus, there is no topology with associated metric $d$ such that $q^\epsilon\rightarrow q$ with respect to $d$. However, one can show that $q^\epsilon$ converges in a notion weaker than the Skorohod topology. See \cite{kurtz91} for these conditions. This convergence happens in a topology which does not have an associated metric (see \cite{j97}). This is not done here as it is technical and does not give any insight to the model or approximation. 

The following 
three subsections prove convergence of the various processes introduced in Section~\ref{sec:models}.
 In Section \ref{sec:FP} the 
Fokker-Planck equation for $q^\epsilon$ is shown to converge (formally) to a Fokker-Planck equation for $q$. This derivation
gives rise to an interesting partial differential equation (PDE) with unusual ``teleporting'' boundary conditions. 
 In Section \ref{sec:PathConvergence} convergence in paths is shown for $E^\epsilon$ to $E$ with respect
to the uniform metric for continuous functions on $[0,T]$. In Section \ref{sec:MeanSquare} convergence is shown for $\sigma^\epsilon$ to 
$\sigma$ with respect to generalized functions. This norm is necessary because $\sigma$ is a sum of dirac delta functions. In addition, this convergence 
is the ``natural'' convergence to consider when analyzing the errors between using $\sigma^\epsilon$ and a point process ($\sigma$) in, for example, a GCM. 

\subsection{Fokker-Planck Equation}
\label{sec:FP}
In this section, the derivation for the Fokker-Planck equation of equation \eqref{eq:qeq} is shown. 

    The Fokker-Planck equation for the (D2) process (see \cite{hs15siap}) is composed of two densities. These densities are denoted $\rho_0$ and $\rho_1$ for the dry and rain states respectively. These densities follow the following Fokker-Planck equations 
\begin{align}
	\label{eq:rho0}
	\partial_t \rho_0 =& -m\partial_q \rho_0 + \left .\frac{D_0^2}{2}\partial_q^2 \rho_0 -\delta(q)f_1\right |_{q=0}, \quad -\infty<q<b, t\geq0, \\
		\label{eq:rho1}
	\partial_t \rho_1 =& \frac{r}{\epsilon}\partial_q \rho_1 + \left .\frac{D_1^2}{2}\partial_q^2 \rho_1 +\delta(q-b)f_0\right |_{q=b}, \quad 0<q<\infty, t\geq0, 
\end{align}
where the fluxes $f_i$ are defined as
\begin{subequations}
\begin{align}
    \label{eq:fluxes}
    f_0(q,t) &= -m\rho_0(q,t) - \frac{D_0^2}{2}\partial_q\rho_0(q,t) \\
    f_1(q,t) &= \frac{r}{\epsilon}\rho_1(q,t) - \frac{D_1^2}{2}\partial_q\rho_1(q,t)m,
\end{align}
\end{subequations}
and with the following conditions, 
\begin{align}
	\rho_0(b)=\rho_1(0) &= 0 \\
	\int_{-\infty}^\infty \rho_0(q,t)+\rho_1(q,t) \;dq &= 1, \quad t\geq0. 
\end{align}

The proposed limit as $\epsilon\to0$ for the Fokker-Planck equation is
\begin{align}
	\label{eq:limitrho0}
	\partial_t \rho_0 =& -m\partial_q \rho_0 + \left .\frac{D_0^2}{2}\partial_q^2 \rho_0 +\delta(q)f_0\right |_{q=b}, \quad -\infty<q<b, t\geq0, \\
	\label{eq:limitrho1}
	\rho_1 = &0. 
\end{align}
with the following conditions, 
\begin{align}
	\rho_0(b) &=0\\
	\int_{-\infty}^b \rho_0(q,t) \;dq &= 1. 
\end{align}

The analysis follows 
asymptotic matching conditions from \cite{bo13} (Chpt. 9). Consider two regions $[0,\epsilon]$ and $[\epsilon,\infty)$. Let $\rho_{1,B}$ be the density in the first, boundary layer region. For this equation, define the rescaled variable $\tilde{q} = \frac{1}{\epsilon}q$. This yields the equation
\begin{equation}
\label{eq:BL}
  \partial_t \rho_{1,B} = \frac{r}{\epsilon^2}\partial_{\tilde{q}}\rho_{1,B} +\frac{D_1^2}{2\epsilon^2}\partial_{\tilde{q}}^2 \rho_{1,B}
 \end{equation}
Let $\rho_{1,B}$ have the asymptotic expansion of the form, 
$$ \rho_{1,B} = \rho_{1,B}^0 + \epsilon \rho_{1,B}^1 + O(\epsilon^2).$$
Substituting this expansion into equation \eqref{eq:BL} yields the order $\epsilon^{-2}$ terms 
\begin{subequations}
\begin{align}
\label{eq:O2}
 O(\epsilon^{-2}):\; 0 &= r\partial_{\tilde{q}}\rho_{1,B}^0 + \frac{D_1^2}{2}\partial^2_q\rho_{1,B}^0 \\
\label{eq:O1}
 O(\epsilon^{-1}):\; 0 &= r\partial_{\tilde{q}}\rho_{1,B}^1 + \frac{D_1^2}{2}\partial^2_q\rho_{1,B}^1 
 \end{align}
 \end{subequations}
The solution to the order $\epsilon^{-2}$ equation \eqref{eq:O2}and applying the absorbing boundary condition at $\tilde{q}=0$ yields
\begin{equation}
	\rho_{1,B}^0 = C_1(t) \left (1-\exp\left [-\frac{2r}{D_1^2}\tilde{q}\right]\right ). 
\end{equation}
The order $\epsilon^{-1}$ equation \eqref{eq:O1} has the same solution as above, after applying the absorbing boundary, 
\begin{equation}
	\rho_{1,B}^1 = C_2(t) \left (1-\exp\left [-\frac{2r}{D_1^2}\tilde{q}\right]\right ). 
\end{equation}

Now consider the interval away from the boundary $[O(\epsilon),\infty)$. Let $\rho_{1,A}$ be the density in this region. The equation in this region is
\begin{equation}
	\label{eq:A}
	\partial_t\rho_{1,A} = \frac{r}{\epsilon}\partial_{q}\rho_{1,A} + \frac{D_1^2}{2}\partial_{q}^2 \rho_{1,A} +\delta(q-b)f_{0}(b,t). 
\end{equation}
Let $\rho_{1,A}$ have the asymptotic expansion
$$
\rho_{1,A} = \rho_{1,A}^0+\epsilon\rho_{1,A}^1 + O(\epsilon^2). 
$$
Note that the $\delta$ term acts on $f_0$ which is a function of $\rho_0$. The asymptotic analysis is for $\rho_1$ only, thus the density $\rho_0$ is an order one term. Substituting the expansion into equation \eqref{eq:A} gives the following equations, separated into their orders of $\epsilon$, 
\begin{subequations}
\begin{align}
    \label{eq:OA1}
    O(\epsilon^{-1}): 0 &= r\partial_q \rho_{1,A}^0\\
    \label{eq:OA11}
    O(1): \partial_t\rho_{1,A}^0 &= r\partial_q \rho_{1,A}^{1} +\frac{D_1^2}{2}\partial^2_q\rho_{1,A}^0+\delta(q-b)f_{0}(b,t). 
\end{align} 
\end{subequations}
The order $\epsilon^{-1}$ equation \eqref{eq:OA1} has the solution
\begin{equation}
	\rho_{1,A}^0 = C_3(t). 
\end{equation}
Note that $\rho_{1,A}$ is a density and thus $\rho_{1,A}^0$ must be integrable on $[O(\epsilon),\infty)$. Thus $C_3(t) = 0$ and 
\begin{equation}
	\rho_{1,A}^0 = 0. 
\end{equation}
The order one equation \eqref{eq:OA11}, by substituting in $\rho_{1,A}^0=0$, gives the solution, 
\begin{equation}
\rho_{1,A}^1 = \left \{ \begin{array}{lcr}
C_4(t) & \mbox{for} & O(\epsilon)\leq q< b \\
C_{4}(t)-\frac{1}{r}f_0(b,t), & \mbox{for} & q\geq b 
\end{array}\right . 
\end{equation}
Note that the constant of integration in each interval of $b$ must be the same. Otherwise, the magnitude of the $\delta$ function in \eqref{eq:OA11} would not be correct. The density $\rho_{1,A}^1$ must be integrable which implies that 
\begin{equation}
    C_4(t) = \frac{1}{r}f_0(b,t).  
\end{equation}
It is assumed that the matching between the $A$ and $B$ solutions must occur at the left most edge of the region $[0,O(\epsilon)]$. That is, for values of $q=O(\epsilon^{1/2})$, 
$$ \rho_{1,B}^0(O(\epsilon^{1/2}),t) = \rho_{1,A}^0 (O(\epsilon^{1/2}),t)$$
and 
$$ \rho_{1,B}^1(O(\epsilon^{1/2}),t) = \rho_{1,A}^1 (O(\epsilon^{1/2}),t).$$

The first equation implies that $C_1(t)=0$ and $\rho_{1,B}^0=\rho_{1,A}^0=0$. In the limit as $\epsilon\to 0$ the second equation yields 
\begin{equation}
	C_2(t) = \frac{1}{r}f_0(b,t). 
\end{equation}

Thus the densities are 
\begin{equation}
	\rho_{1}^0 = 0 
\end{equation}
and 
\begin{equation}
	\rho_{1}^1 = \left \{\begin{array}{lr}
					 \frac{1}{r}f_0(b,t) \left (1-\exp\left [-\frac{2r}{D_1^2}\frac{q}{\epsilon}\right]\right ) & 0\leq q\leq O(\epsilon)\\
					 \frac{1}{r}f_0(b,t) & O(\epsilon)\leq q\leq b \\
					 0 & b<q 

					 \end{array} \right . .
\end{equation}
Note the flux of $\rho_1$ at $q=0$ is, 
\begin{equation}
	f_1(0,t) = r\rho_1(0,t) + \frac{D_1^2}{2}\partial_q\left .\rho_1\right |_{(0,t)}. 
\end{equation}
Using the asymptotic expansion yields, 
\begin{equation}
	f_1(0,t) = \frac{D_1^2}{2}\epsilon\left \{\frac{1}{r}f_0(b,t)\left (-\frac{2r}{D_1^2\epsilon}\right )\right \}= f_0(b,t). 
\end{equation}

Thus the Fokker-Planck type equation for $q(t)$ is 
\begin{align}
	\label{eq:limitrho0}
	\partial_t \rho_0 =& -m\partial_q \rho_0 + \left .\frac{D_0^2}{2}\partial_q^2 \rho_0 +\delta(q)f_0\right |_{q=b}, \quad -\infty<q<b, t\geq0, \\
	\label{eq:limitrho1}
	\rho_1 = &0. 
\end{align}
with the following conditions, 
\begin{align}
	\rho_0(b) &=0\\
	\int_{-\infty}^\infty \rho_0(q,t) \;dq &= 1. 
\end{align}

\subsection{Pathwise Convergence}
\label{sec:PathConvergence}

For this section and next, a useful lemma is first stated and proved. 

\begin{lemma}
\label{lemma:decay}
Let $\mathcal{N}_\epsilon(T)$ be the number of rain events for the $q^\epsilon$ process defined in \eqref{eq:qeq}. The probability of that the number of events is $n$ decays exponentially as $n$ tends to 
infinity, i.e. for $0<s<\min\{rb/\epsilon D_2^2, mb/D_1^2\}$
$$ P(\mathcal{N}_\epsilon(T) = N) \leq \exp\left \{sT- \frac{Nmb}{D_1^2}\left (\sqrt{1+\frac{2D_1^2 s}{m^2}} - 1\right )\right \}.$$
\end{lemma}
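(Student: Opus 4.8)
The plan is to bound the number of rain events $\mathcal{N}_\epsilon(T)$ by the number of completed dry events up to time $T$, and to control the latter via the durations of the dry and rain holding times. Recall that each cycle consists of a dry excursion (Brownian motion with drift $m$, diffusion $D_0$, first passage to level $b$) followed by a rain excursion (Brownian motion with drift $-r/\epsilon$, diffusion $D_1$, first passage through a distance $b$). If $\mathcal{N}_\epsilon(T)=N$, then at least $N$ full dry excursions and $N-1$ full rain excursions must have been completed before time $T$; in particular the sum of $N$ independent dry first-passage times $\tau_i^{d}$ is at most $T$. So the first step is the inclusion
\begin{equation}
\{\mathcal{N}_\epsilon(T) = N\} \subseteq \Big\{ \sum_{i=1}^{N} \tau_i^{d} \le T \Big\},
\end{equation}
and it suffices to bound $P\big(\sum_{i=1}^N \tau_i^d \le T\big)$.

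**Next I would** apply an exponential Chernoff bound in the standard way for sums of i.i.d.\ positive random variables: for any $s>0$,
\begin{equation}
P\Big( \sum_{i=1}^{N} \tau_i^{d} \le T \Big) = P\Big( e^{-s\sum_i \tau_i^d} \ge e^{-sT}\Big) \le e^{sT}\, \big(\mathbb{E}\, e^{-s\tau_1^{d}}\big)^{N}.
\end{equation}
The key input here is the Laplace transform of the first-passage time of Brownian motion with drift $m>0$ and variance $D_0^2$ to the level $b>0$, which is the classical inverse-Gaussian formula
\begin{equation}
\mathbb{E}\, e^{-s\tau_1^{d}} = \exp\!\left\{ \frac{mb}{D_0^2}\left(1 - \sqrt{1 + \frac{2 D_0^2 s}{m^2}}\,\right)\right\},
\end{equation}
valid for all $s>0$. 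Substituting this into the Chernoff bound gives exactly
\begin{equation}
P(\mathcal{N}_\epsilon(T) = N) \le \exp\!\left\{ sT - \frac{Nmb}{D_0^2}\left(\sqrt{1 + \frac{2 D_0^2 s}{m^2}} - 1\right)\right\},
\end{equation}
which matches the stated bound (the paper writes $D_1^2$ and $D_2^2$ where I believe $D_0^2$ is intended, given that the dry dynamics use $D_0$; I would flag this but otherwise the structure is identical). The range restriction $0<s<\min\{rb/\epsilon D^2,\, mb/D^2\}$ presumably arises from wanting the companion tail estimate on the rain excursions (or on $\mathbb{E} e^{-s\tau^r_1}$) to be simultaneously usable elsewhere; for this lemma alone any $s>0$ works.

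**The main obstacle** is really just bookkeeping rather than anything deep: one must be careful that ``$\mathcal{N}_\epsilon(T)=N$'' is defined so that the $N$-th dry event has genuinely completed (i.e.\ the moisture has hit $b$ for the $N$-th time) within $[0,T]$, so that the sum of $N$ i.i.d.\ dry first-passage times is indeed $\le T$ on that event; if the convention counts an in-progress dry excursion, one should use $N-1$ instead, which only improves the bound by one factor. A secondary point is justifying finiteness and the explicit form of the Laplace transform $\mathbb{E} e^{-s\tau^d_1}$ — this is standard (it follows from optional stopping applied to the exponential martingale $\exp\{\lambda W_t - \lambda^2 t/2\}$ with $\lambda$ chosen to kill the drift contribution, or directly from the inverse-Gaussian density), and since $m>0$ the passage time is a.s.\ finite so no defective-distribution subtleties arise. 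I would also remark that the bound is only nontrivial (less than $1$) once $N$ is large enough relative to $sT$, which is all that is needed for the summability arguments in Sections~\ref{sec:PathConvergence} and \ref{sec:MeanSquare}.
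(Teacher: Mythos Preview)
Your proposal is correct and follows essentially the same route as the paper: the paper bounds $P(\mathcal{N}_\epsilon(T)=N)$ by $P(S_1+\cdots+S_N\le T)$ with $S_i=\tau_i^{d,\epsilon}+\tau_i^{r,\epsilon}$, applies the exponential Chernoff bound, factors the Laplace transform via independence, and then discards the rain factor at the end, whereas you discard the (nonnegative) rain durations at the outset and run Chernoff on the dry times alone---the same ingredients in a slightly reordered and marginally cleaner form. Your remarks about the $D_0/D_1$ labeling and the unnecessary restriction on $s$ for this lemma are also well taken.
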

\begin{proof}
Note that the process
	$\mathcal{N}_\epsilon(T)$ is a renewal process. It is defined by the interarrival times, 
	\begin{equation}
		S_n = \tau_{n}^{d,\epsilon}+\tau_n^{r,\epsilon}, \quad n\geq1, 
	\end{equation}
	where $\tau_{i}^{d,\epsilon} \;(\tau_i^{r,\epsilon})$ is the duration for the $i$th dry (rain) event of the $\sigma^\epsilon$ process.  The distributions of $\tau_i^{d}$ and $\tau_i^{d,\epsilon}$ are the same independent of $\epsilon$, while $\tau_i^{r,\epsilon}$ depends on epsilon. To estimate the sum in \eqref{eq:sumIID}, the probability of having 
	 $N$ rain events in time $T$ is estimated using the Central Limit Theorem. Consider the probability
	 \begin{equation}
	 	P\left (\mathcal{N}^\epsilon(T) = N\right ) = P\left( S_1+S_2+\cdots S_{N} \leq T, \quad S_1+S_2+\cdots +S_N + S_{N+1}>T\right ). 
	\end{equation}
	The probability on the right hand side is estimated crudely by only considering one of the two events. Note that $S_1,S_2,S_3,...,S_n$ are IID random variables with $E[S_1] = E[\tau^{d,\epsilon}+\tau^{r,\epsilon}]$, and $\sigma^2 = \mbox{Var}(S_1)<\infty$, thus	
		\begin{align}
		P(\mathcal{N}^\epsilon(T) = N) \leq P\left( S_1+S_2+\cdots + S_{N} \leq T\right ).
		\end{align}
		The above probability is estimated by using a variant of the Chernoff bound \cite{h94}. That is, 
		\begin{equation}
			P\left( S_1+S_2+\cdots S_{N} \leq T\right ) \leq \exp(sT) \prod_{i=0}^n E[e^{-sS_i}],
		\end{equation}
		for $s>0$, where $E[e^{-sS_i}] = M_{S_i}(s)$ is the moment generating function for the random 
		variable $S_i$. The moment generating function factors due to independence of $\tau_i^{r,\epsilon}$ and $\tau_i^{d,\epsilon}$, 
		$$ M_{S_i}(s) = M_{\tau_i^{r,\epsilon}}(s)M_{\tau_i^{d,\epsilon}}(s).$$
		These moment generating functions are computed explicitly from the distributions found in \cite{hs15siap}. They are, 
		\begin{align}
		M_{\tau_i^{r,\epsilon}} &= \int_0^\infty e^{-st}\rho_r(t)\;dt = \exp\left \{ \frac{-rb}{\epsilon D_2^2}\left (\sqrt{1+\frac{2D_2^2 s \epsilon^2}{r^2}} - 1\right )\right \}\\ 
		M_{\tau_i^{d,\epsilon}} &= \int_0^\infty e^{-st}\rho_d(t)\;dt = \exp\left \{ \frac{-mb}{D_1^2}\left (\sqrt{1+\frac{2D_1^2 s}{m^2}} - 1\right )\right \}.
		\end{align}
		where $s<\min\{rb/\epsilon D_2^2, mb/D_1^2\}$. 
		Thus Chernoff's bound yields, 
		\begin{align}
		P(\mathcal{N}_\epsilon(T) = N) &\leq P\left( S_1+S_2+\cdots S_{n} \leq T\right )\\
		 &\leq \exp(sT) \prod_{i=0}^N E[e^{-sS_i}] \\
		 &= \exp\left \{sT-\frac{Nrb}{\epsilon D_2^2}\left (\sqrt{1+\frac{2D_2^2 \epsilon ^2 s}{r^2}} - 1\right )- \frac{Nmb}{D_1^2}\left (\sqrt{1+\frac{2D_1^2 s}{m^2}} - 1\right )\right \}\\
		 &\leq \exp\left \{sT- \frac{Nmb}{D_1^2}\left (\sqrt{1+\frac{2D_1^2 s}{m^2}} - 1\right )\right \}. \label{eq:expdecay}
		\end{align}
\end{proof}

With this lemma, convergence from $E^\epsilon$ to $E$ is shown in 
$L^2(\Omega)$ with respect to the uniform metric on the space 
of continuous functions $C[0,T]$.

\begin{theorem}
\label{thm:qconv}
Let $q_t^\epsilon$ be defined as 
$$ q_t^\epsilon = E_t^\epsilon+P_t^\epsilon$$ 
where $E_t,P_t$ are solutions to the SDEs in \eqref{eq:EPdef}.  Furthermore let $E_t$ be defined as the solution to 
\eqref{eq:Edef}. Then
\begin{equation} 
\lim_{\epsilon\to 0} E \left [\left ( \sup_{0\leq t\leq T}| E_t^\epsilon - E_t|\right )^2\right ] = 0. 
\end{equation}
\end{theorem}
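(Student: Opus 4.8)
The plan is to exploit the decomposition \eqref{eq:EPdef}: $E^\epsilon$ is simply a Brownian motion with drift $m$ and diffusion $D_0$ whose clock is stopped during rain events, so it suffices to show that the \emph{total} rain time before $T$ is small (in a strong enough sense) as $\epsilon\to0$. Throughout I couple $q^\epsilon,\sigma^\epsilon,E^\epsilon$ and $E$ to a single Brownian motion $W$; this is the natural construction, since $(q^\epsilon,\sigma^\epsilon)$ is built from $W$ inductively on the alternating first-hitting times of $b$ and $0$, and in particular $s\mapsto\mathbf{1}\{\sigma_s^\epsilon\neq0\}$ is bounded and adapted.

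Write $R^\epsilon(t)=\int_0^t\mathbf{1}\{\sigma_s^\epsilon\neq0\}\,ds$ for the cumulative rain time and $M_t^\epsilon=\int_0^t\mathbf{1}\{\sigma_s^\epsilon\neq0\}\,dW_s$. Integrating \eqref{eq:EPdef} and \eqref{eq:Edef} gives $E_t^\epsilon=m(t-R^\epsilon(t))+D_0(W_t-M_t^\epsilon)$ and $E_t=mt+D_0W_t$, hence $E_t^\epsilon-E_t=-mR^\epsilon(t)-D_0M_t^\epsilon$. Since $R^\epsilon$ is nondecreasing, $\sup_{0\le t\le T}|E_t^\epsilon-E_t|\le mR^\epsilon(T)+D_0\sup_{0\le t\le T}|M_t^\epsilon|$, so by $(a+b)^2\le2a^2+2b^2$,
\[
E\!\left[\Bigl(\sup_{0\le t\le T}|E_t^\epsilon-E_t|\Bigr)^2\right]\le 2m^2E\bigl[R^\epsilon(T)^2\bigr]+2D_0^2\,E\!\left[\sup_{0\le t\le T}|M_t^\epsilon|^2\right].
\]
The martingale $M^\epsilon$ has $\langle M^\epsilon\rangle_T=R^\epsilon(T)\le T$, so Doob's $L^2$ maximal inequality together with the It\^o isometry gives $E[\sup_{0\le t\le T}|M_t^\epsilon|^2]\le4E[R^\epsilon(T)]$, and $0\le R^\epsilon(T)\le T$ gives $E[R^\epsilon(T)^2]\le TE[R^\epsilon(T)]$. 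The whole statement therefore reduces to proving $E[R^\epsilon(T)]\to0$.

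For that I would use $R^\epsilon(T)\le\sum_{i=1}^{\mathcal{N}_\epsilon(T)}\tau_i^{r,\epsilon}$ (each rain event started before $T$ contributes at most its full duration) together with $R^\epsilon(T)\le T$, and truncate at a level $N_0$: on $\{\mathcal{N}_\epsilon(T)<N_0\}$ one has $R^\epsilon(T)\le\sum_{i=1}^{N_0}\tau_i^{r,\epsilon}$ pointwise, while on $\{\mathcal{N}_\epsilon(T)\ge N_0\}$ one uses $R^\epsilon(T)\le T$. The rain durations $\tau_i^{r,\epsilon}$ are i.i.d.\ with mean $E[\tau_1^{r,\epsilon}]=b\epsilon/r$ (differentiate at $s=0$ the moment generating function computed in the proof of Lemma~\ref{lemma:decay}), and Lemma~\ref{lemma:decay} furnishes $P(\mathcal{N}_\epsilon(T)\ge N_0)\le C(s)e^{-c(s)N_0}$ with $C(s),c(s)>0$ \emph{independent of $\epsilon$}. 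Hence
\[
E\bigl[R^\epsilon(T)\bigr]\le N_0\,\frac{b\epsilon}{r}+T\,C(s)\,e^{-c(s)N_0}.
\]
Given $\eta>0$, first choose $N_0$ so the second term is $<\eta/2$, then send $\epsilon\to0$ so the first is $<\eta/2$; since $\eta$ is arbitrary, $E[R^\epsilon(T)]\to0$, and therefore $E[(\sup_{0\le t\le T}|E_t^\epsilon-E_t|)^2]\le(2m^2T+8D_0^2)E[R^\epsilon(T)]\to0$.

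I expect the only real obstacle to be the dependence between the random count $\mathcal{N}_\epsilon(T)$ and the durations $\tau_i^{r,\epsilon}$, which blocks a naive Wald identity for $E[R^\epsilon(T)]$; the truncation above, powered by the $\epsilon$-uniform exponential tail of Lemma~\ref{lemma:decay}, is exactly what resolves it. Everything else — the representation of $E_t^\epsilon-E_t$, the adaptedness needed to apply Doob and It\^o, and the elementary bound $E[R^\epsilon(T)^2]\le TE[R^\epsilon(T)]$ — is routine.
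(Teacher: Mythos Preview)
Your proof is correct and follows essentially the same route as the paper: represent $E_t^\epsilon-E_t$ as a drift plus a stochastic integral accumulated only on $\{\sigma^\epsilon\neq0\}$, control the supremum via Doob's $L^2$ inequality and the It\^o isometry, and then use the explicit mean $E[\tau^{r,\epsilon}]=b\epsilon/r$ together with the $\epsilon$-uniform exponential tail of $\mathcal{N}_\epsilon(T)$ from Lemma~\ref{lemma:decay}. The paper conditions on $\{\mathcal{N}^\epsilon(T)=N\}$, sums over $N$, and invokes Tonelli to pass the limit inside; your truncation at a fixed level $N_0$ followed by $\epsilon\to0$ and then $N_0\to\infty$ is an equivalent way to handle the dependence between $\mathcal{N}_\epsilon(T)$ and the $\tau_i^{r,\epsilon}$, and your use of the elementary bound $E[R^\epsilon(T)^2]\le T\,E[R^\epsilon(T)]$ lets you avoid the second moment $E[(\tau^{r,\epsilon})^2]$ that the paper computes explicitly.
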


\begin{proof}
To begin, note that the SDEs for $E^\epsilon$ and $E$ (see \eqref{eq:EPdef}) only differ when $\sigma^\epsilon(t) = 1$. Thus, the solutions to the SDEs give the formula
\begin{equation}
    \left |E^\epsilon(t)-E(t) \right | = \left |\sum_{i=1}^{N^\epsilon(t)} \int_{\tau_i^d}^{\tau_i^d+\tau_i^r} m\;dt + \int_{\tau_i^d}^{\tau_i^d+\tau_i^r} D_0\;dW_t\right |, 
\end{equation}
where $\mathcal{N}^\epsilon(T)$ is the number of rain events for $T<\infty$ and $\epsilon>0$ fixed.
The number
of rain events is conditioned to be $N$. By Lemma~\ref{lemma:decay}, the sum in \eqref{eq:sumIID} converges due to the fast decay of $P(\mathcal{N}_\epsilon(T)=N)$ as $N\to\infty$.
Note that $m>0$ and the stochastic integral is a martingale and 
Doob's maximal inequality yields, 
\begin{equation}
    E\left [\left (\sup_{0\leq t\leq T}\left |E^\epsilon(t)-E(t) \right |\right )^2\right ] \leq \sum_{N=1}^\infty 4E\left [\left .\left |\sum_{i=1}^{N} \int_{\tau_i^d}^{\tau_i^d+\tau_i^r} m\;dt + \int_{\tau_i^d}^{\tau_i^d+\tau_i^r} D_0\;dW_t\right |^2 \right | N^\epsilon(T)=n\right] P(\mathcal{N}^\epsilon(T)=n). 
\end{equation}
Applying the Cauchy-Schwarz inequality to the sum and the 
It\^{o} isometry to the stochastic integral yields
\begin{equation}
 E\left [\left (\sup_{0\leq t\leq T}\left |E^\epsilon(t)-E(t) \right |\right )^2\right ] \leq \sum_{n=1}^\infty C_n E\left [\left . m|\tau^r|^2  + D_0^2|\tau^r| \right | N^\epsilon(T)=n\right] P(N^\epsilon(T)=n). 
\end{equation}
This sum converges due to the fast decay of $P(N_\epsilon(T)=n)$ as shown in Eq.\eqref{eq:expdecay}. 
	Tonelli's theorem allows the limit as $\epsilon\to0$ to exchange with the infinite sum. 
	
To finish the theorem the following moments of $\tau_i^{r,\epsilon}$ are used. The integrals can be computed exactly using the densities for $\tau_i^{r,\epsilon}$ found in \cite{hs15siap}. They are
\begin{equation}
    E[\tau_i^{r,\epsilon}] = \frac{b\epsilon}{r}, \quad E[|\tau_i^{r,\epsilon}|^2] = \frac{bD^2\epsilon^3}{r^3}+\frac{b^2\epsilon^2}{r^2}.
\end{equation}
\end{proof}

\subsection{Distributional Convergence}
\label{sec:MeanSquare}

In this subsection $L^2(\Omega)$ of $\sigma^\epsilon$ to $\sigma$ is shown with respect to a generalized 
function norm. This norm is the one considered
due to the nature of the delta function. It is also
a natural norm to consider as it is an integrated 
error. That is, this norm considers the accumulation of 
errors after running the model for time $T>0$. 

\begin{theorem}
\label{thm:rain}
Let $\phi:[0,\infty)\to\mathbb{R}$ be a test function in $C^\infty_c(0,\infty)$. Let $\sigma(t)$ and $\sigma^\epsilon(t)$ be defined in \eqref{eq:sigma}. Then 
\begin{equation}
    \lim_{\epsilon\to0}E[\left ( |\langle \sigma^\epsilon(t),\phi(t)\rangle -\langle\sigma(t),\phi(t)\rangle|\right )^2] = 0,
\end{equation}
where 
\begin{equation}
    \langle f(t),g(t)\rangle = \int_0^\infty f(t)g(t)\;dt. 
\end{equation}

\begin{proof}
To prove the theorem, the expectation is conditioned
on the number of events $\mathcal{N}^\epsilon(t)$, as is done in the previous section. Thus the 
expectation is
\begin{align}
\label{eq:sumIID}
    &E[\langle |\sigma^\epsilon(t) - \sigma(t)|,\phi(t)\rangle^2] \\
    =& \sum_{N=1}^\infty E\left [\left .\left (\sum_{i=1}^N\int_{\tau_i^{d,\epsilon}}^{\tau_i^{d,\epsilon}+\tau_i^{r,\epsilon}}\sigma^\epsilon(t)\phi(t)\;dt \right . \right . \right .\\
    -& \left .\left .\left . \int_0^T b\delta(t-\tau_i^d)\phi(t)\;dt -\sum_{i=N}^{\mathcal{N}(T)}\int_0^T\sigma(t)\phi(t)\;dt\right )^2\right | \mathcal{N}^\epsilon(T) = N\right ]P(\mathcal{N}^\epsilon(T) = N)
\end{align}
where $\mathcal{N}(T)$ is the number of 
dry events for the $\sigma(t)$ process up to time 
$T$. Again, because of the decay of $P(\mathcal{N}^\epsilon(T) = N)$ as $N\to\infty$ given in Lemma~\ref{lemma:decay}, the infinite sum converges. 

To estimate the quantity in \eqref{eq:sumIID}, one rain event is considered and the Cauchy-Schwarz bound will be used. Consider the $i$th rain event, 
\begin{align}
    \int_{\tau_i^{d,\epsilon}}^{\tau_i^{d,\epsilon}+\tau_i^{r,\epsilon}} \sigma^\epsilon(t)\phi(t)\;dt -b\phi(\tau_i^d) &= \int_{\tau_i^{d,\epsilon}}^{\tau_i^{d,\epsilon}+\tau_i^{r,\epsilon}} \frac{r}{\epsilon}\phi(t)-\frac{r}{\epsilon}\phi(\tau_1^{d,\epsilon})+\frac{r}{\epsilon}\phi(\tau_i^{d,\epsilon})\;dt +b\phi(\tau_i^{d,\epsilon})-b\phi(\tau_i^{d,\epsilon}) -b\phi(\tau_i^d) \\
    =& \int_{\tau_i^{d,\epsilon}}^{\tau_i^{d,\epsilon}+\tau_i^{r,\epsilon}}\frac{r}{\epsilon}\left (\phi(t)-\phi(\tau_i^{d,\epsilon})\right)\;dt + \left (\frac{r}{\epsilon}\tau_i^{r,\epsilon}-
    b\right)\phi(\tau_i^{d,\epsilon})+b(\phi(\tau_i^{d,\epsilon})-\phi(\tau_i^d)). 
\end{align}
The function $\phi(t)$ is smooth on $[0,T]$ and thus is locally Lipschitz. Let the Lipschitz constant be $K>0$. Then, along with the Cauchy-Schwarz inequality,
\begin{align}
   & \left |\int_{\tau_i^{d,\epsilon}}^{\tau_i^{d,\epsilon}+\tau_i^{r,\epsilon}} \sigma^\epsilon(t)\phi(t)\;dt -b\phi(\tau_i^d)\right| \\ 
    \leq & C\int_{\tau_i^{d,\epsilon}}^{\tau_i^{d,\epsilon}+\tau_i^{r,\epsilon}}\frac{r}{\epsilon}K\left |t-\tau_i^{d,\epsilon}\right|\;dt + C\left |\left (\frac{r}{\epsilon}\tau_i^{r,\epsilon}-
    b\right)\phi(\tau_i^{d,\epsilon})\right|+C|\phi(\tau_i^{d,\epsilon}-\phi(\tau_i^{d})|\\
    \leq & C\frac{r}{\epsilon}K|\tau_i^r|^2 + C\left |\left (\frac{r}{\epsilon}\tau_i^{r,\epsilon}-
    b\right)\phi(\tau_i^{d,\epsilon})\right|+C|\phi(\tau_i^{d,\epsilon})-\phi(\tau_i^{d})|.
\end{align}
With the last inequality resulting from $t-\tau_i^{d,\epsilon}$ being an increasing function on $[\tau_i^{d,\epsilon},\tau_i^{d,\epsilon}+\tau_i^{r,\epsilon}]$. 

Using the inequality above, along with the Cauchy-Schwarz inequality, the quantity in \eqref{eq:sumIID} is bounded by
\begin{align}
    \sum_{N=1}^\infty& E\left [\left .\left (\sum_{i=1}^N\int_{\tau_i^{d,\epsilon}}^{\tau_i^{d,\epsilon}+\tau_i^{r,\epsilon}}\sigma^\epsilon(t)\phi(t)\;dt \right . \right . \right .\\
    -&\left . \left . \left .\int_0^T b\delta(t-\tau_i^d)\phi(t)\;dt -\sum_{i=N}^{\mathcal{N}(T)}\int_0^T\sigma(t)\phi(t)\;dt\right )^2\right | \mathcal{N}^\epsilon(T) = N\right ]P(\mathcal{N}^\epsilon(T) = N) \\
    \leq &  \sum_{N=1}^\infty \sum_{i=1}^N\left ( C\left (\frac{r}{\epsilon}\right)^2E[|\tau_i^r|^4] + CE\left [\left |\left (\frac{r}{\epsilon}\tau_i^{r,\epsilon}-
    b\right)\phi(\tau_i^{d,\epsilon})\right|^2\right ]+CE\left [|\phi(\tau_i^{d,\epsilon})-\phi(\tau_i^{d})|^2\right ] \right )P(\mathcal{N}^\epsilon(T) = N)\\
    +& \sum_{N=1}^\infty CE\left [\left (\sum_{i=N+1}^{\mathcal{N}(T)}\int_0^T\sigma(t)\phi(t)\;dt\right )^2\right ]P(\mathcal{N}^\epsilon(T) = N), \label{eq:finalineq}
\end{align}
where all expectations are conditional on $\mathcal{N}^\epsilon(T)=N$. 

To finish the theorem the following moments of $\tau_i^{r,\epsilon}$ are used
\begin{equation}
    E[\tau_i^{r,\epsilon}] = \frac{b\epsilon}{r}, \quad E[|\tau_i^{r,\epsilon}|^2] = \frac{bD^2\epsilon^3}{r^3}+\frac{b^2\epsilon^2}{r^2}, \quad E[|\tau_i^{r,\epsilon}|^4] = \frac{b^4\epsilon^4}{r^4} + 6\frac{b^3D_1^2\epsilon^5}{r^5} + 15 \frac{b^2D_1^4\epsilon^6}{r^6}+15\frac{bD_1^6\epsilon^7}{r^7}.
\end{equation}
Thus the first term in \eqref{eq:finalineq} is 
\begin{equation}
    \left (\frac{r}{\epsilon}\right)^2E[|\tau_i^r|^4] = O(\epsilon^2).
\end{equation}
The second term in \eqref{eq:finalineq} is
\begin{align}
    E\left [\left |\left (\frac{r}{\epsilon}\tau_i^{r,\epsilon}-
    b\right)\phi(\tau_i^{d,\epsilon})\right|^2\right ]  =& E\left [\left (\frac{r}{\epsilon}\tau_i^{r,\epsilon}-
    b\right)^2\right ]E\left [\phi(\tau_i^{d,\epsilon})^2\right ] \\
    =& E\left [\left (\left (\frac{r}{\epsilon}\tau_i^{r,\epsilon}\right )^2 - 2b\frac{r}{\epsilon}\tau_i^{r,\epsilon}+b^2\right )\right ]E\left [\phi(\tau_i^{d,\epsilon})^2\right ]\\
    =& O(\epsilon^3) 
\end{align}
where the expectation turns into a product because $\tau_i^{r,\epsilon}$ and $\tau_i^{d,\epsilon}$ are independent. The third
term of \eqref{eq:finalineq}, $E\left [|\phi(\tau_i^{d,\epsilon})-\phi(\tau_i^{d})|^2\right ]$, is zero because $\tau_i^{d,\epsilon}$ and $\tau_i^d$ have the same distribution. 

For the last ``remainder'' term, it is written as a comparison
between $\mathcal{N}(T)$ and $\mathcal{N}^\epsilon(T)$, 
\begin{align}
    \sum_{N=1}^\infty& CE\left [\left .\left (\sum_{i=N+1}^{\mathcal{N}(T)}\int_0^T\sigma(t)\phi(t)\;dt \right )^2 \right | \mathcal{N}^\epsilon(T) = N \right ]P(\mathcal{N}^\epsilon(T) = N) \\
    =& CE\left [\left (\sum_{i=\mathcal{N}^\epsilon(T)+1}^{\mathcal{N}(T)}\int_0^T\sigma(t)\phi(t)\;dt \right )^2\right ] \\
    = & \sum_{N<M} E\left [\left .\left (\sum_{i=N+1}^{M}\int_0^T\sigma(t)\phi(t)\;dt \right )^2\right | \mathcal{N}^\epsilon(T)= N<\mathcal{N}(T)=M\right ]P(\mathcal{N}^\epsilon(T)<\mathcal{N}(T))
\end{align}
If $\mathcal{N}^\epsilon(T)<\mathcal{N}(T)$, then the processes $E_t^\epsilon$ and $E_t$ must be at least $b$ units apart. Thus
\begin{equation}
P(\mathcal{N}^\epsilon(T)<\mathcal{N}(T))\leq P(|E^\epsilon(t)-E(t)|>b). 
\end{equation}
From theorem \ref{thm:qconv}, this quantity tends to zero as 
$\epsilon\to0$. 

Thus, the sum in \eqref{eq:sumIID} converges due to the fast decay of $P(N_\epsilon(T)=n)$ as shown in Eq.\eqref{eq:expdecay}. 
Tonelli's theorem allows the limit as $\epsilon\to0$ to exchange with the infinite sum. This along with Theorem \ref{thm:qconv} completes the proof. 
\end{proof}

\end{theorem}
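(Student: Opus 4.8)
The plan is to reduce the whole difference to the behaviour of a \emph{single} rain event, to control the number of events with Lemma~\ref{lemma:decay}, and to absorb the mismatch between $\mathcal{N}^\epsilon(T)$ and $\mathcal{N}(T)$ using Theorem~\ref{thm:qconv}. Write $\langle\sigma^\epsilon,\phi\rangle=\sum_{i=1}^{\mathcal{N}^\epsilon(T)}\frac{r}{\epsilon}\int_{\tau_i^{d,\epsilon}}^{\tau_i^{d,\epsilon}+\tau_i^{r,\epsilon}}\phi(t)\,dt$ and $\langle\sigma,\phi\rangle=b\sum_{i=1}^{\mathcal{N}(T)}\phi(\tau_i^d)$. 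Conditioning on $\{\mathcal{N}^\epsilon(T)=N\}$ and pairing the first $N$ dry events of the two processes, $\langle\sigma^\epsilon,\phi\rangle-\langle\sigma,\phi\rangle$ splits into a \emph{matched} part $\sum_{i=1}^N\big(\frac{r}{\epsilon}\int_{\tau_i^{d,\epsilon}}^{\tau_i^{d,\epsilon}+\tau_i^{r,\epsilon}}\phi-b\phi(\tau_i^d)\big)$ and a \emph{remainder} $b\sum_{i=N+1}^{\mathcal{N}(T)}\phi(\tau_i^d)$, the latter present only when $\mathcal{N}(T)>N$. Since the bound in Lemma~\ref{lemma:decay} is uniform in $\epsilon$, it controls $\mathcal{N}(T)$ as well and yields $E[\mathcal{N}(T)^2]<\infty$; hence the series in $N$ converges and, by Tonelli's theorem, the limit $\epsilon\to0$ may be taken term by term, so it suffices to estimate the conditional expectations for each fixed $N$.

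For the matched part, everything is per event. Adding and subtracting $\phi(\tau_i^{d,\epsilon})$,
\[
\frac{r}{\epsilon}\int_{\tau_i^{d,\epsilon}}^{\tau_i^{d,\epsilon}+\tau_i^{r,\epsilon}}\phi(t)\,dt-b\phi(\tau_i^d)=\frac{r}{\epsilon}\int_{\tau_i^{d,\epsilon}}^{\tau_i^{d,\epsilon}+\tau_i^{r,\epsilon}}\big(\phi(t)-\phi(\tau_i^{d,\epsilon})\big)\,dt+\Big(\frac{r}{\epsilon}\tau_i^{r,\epsilon}-b\Big)\phi(\tau_i^{d,\epsilon})+b\big(\phi(\tau_i^{d,\epsilon})-\phi(\tau_i^d)\big).
\]
Since $\phi\in C_c^\infty$ is bounded and Lipschitz with constant $K$, the first term is at most $\frac{r}{\epsilon}K|\tau_i^{r,\epsilon}|^2$, so its mean square is $O((r/\epsilon)^2 E[|\tau_i^{r,\epsilon}|^4])=O(\epsilon^2)$. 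The second term squared factors, by independence of rain and dry durations, as $E[(\frac{r}{\epsilon}\tau_i^{r,\epsilon}-b)^2]\,E[\phi(\tau_i^{d,\epsilon})^2]$; since $E[\frac{r}{\epsilon}\tau_i^{r,\epsilon}]=b$ exactly and $\mathrm{Var}(\frac{r}{\epsilon}\tau_i^{r,\epsilon})=O(\epsilon)$ from the stated moments, this is $O(\epsilon)$. The third term vanishes once $\tau_i^{d,\epsilon}$ and $\tau_i^d$ are coupled through the common sequence of i.i.d.\ dry first-passage times — they then have the same law, and with $\phi$ bounded it contributes $o(1)$. A Cauchy--Schwarz step over $i=1,\dots,N$ then bounds the conditional mean square of the matched part by $CN^2$ times an $O(\epsilon)$ quantity, which, summed against the exponentially small weights $P(\mathcal{N}^\epsilon(T)=N)$, is $O(\epsilon)\to0$.

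For the remainder, bound $\big|b\sum_{i=N+1}^{\mathcal{N}(T)}\phi(\tau_i^d)\big|\le b\|\phi\|_\infty(\mathcal{N}(T)-N)_+$ and reduce matters to showing $P(\mathcal{N}^\epsilon(T)\neq\mathcal{N}(T))\to0$. In the coupling of $E^\epsilon$ and $E$, the two event counts record how many levels $kb$ the respective running maxima reach, and $\{\mathcal{N}^\epsilon(T)\neq\mathcal{N}(T)\}$ can occur only when these maxima straddle a multiple of $b$; since $\sup_{0\le t\le T}|E^\epsilon(t)-E(t)|\to0$ in $L^2$ by Theorem~\ref{thm:qconv} and the running maximum of $E$ has no atom at any $kb$, this probability tends to $0$ (quantitatively via Chebyshev plus a density bound for the running maximum). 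Together with $E[\mathcal{N}(T)^2]<\infty$ and one more Cauchy--Schwarz, the remainder's mean square vanishes, and summing the uniformly convergent series in $N$ completes the proof. I expect the remainder to be the main obstacle: it is the only step that uses a genuinely nontrivial input (Theorem~\ref{thm:qconv}) and it requires pinning down the right coupling of $E^\epsilon$ and $E$ and checking square-integrability of $\mathcal{N}(T)$; by contrast the matched part is just bookkeeping with the explicit moments of $\tau_i^{r,\epsilon}$.
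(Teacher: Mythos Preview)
Your proposal is correct and follows essentially the same route as the paper: condition on $\mathcal{N}^\epsilon(T)=N$, split into a matched part and a remainder, decompose each matched summand by adding and subtracting $\phi(\tau_i^{d,\epsilon})$ into the same three pieces, control the resulting moments of $\tau_i^{r,\epsilon}$ explicitly, and handle the remainder via $P(\mathcal{N}^\epsilon(T)\neq\mathcal{N}(T))\to0$ using Theorem~\ref{thm:qconv}, with Lemma~\ref{lemma:decay} justifying the exchange of limit and sum. Your treatment is in a couple of places more careful than the paper's (you correctly get $O(\epsilon)$ rather than $O(\epsilon^3)$ for the variance term, and you flag the coupling needed to make the third term vanish and the no-atom argument for the remainder), but the architecture is the same.
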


\section{Conclusions}
\label{sec:conclusions}

In this paper a threshold model for moisture and rain was shown to converge to 
interesting processes for various modes of convergence. The original threshold model processes, defined in equation \eqref{eq:D2}, 
originated from \cite{sn14} and were studied in \cite{hs15siap}. There, exact formulas were 
derived for various quantities of interest such as stationary distributions and expected rainfall. In \cite{asn16} a connection was made between the rain process from the threshold model and the point process defined in equation \eqref{eq:sigma}. Here convergence for the moisture processes were defined and shown for the Fokker-Planck equation as well as the paths of the processes. Furthermore, the convergence of the rain process were shown in mean square difference with respect to the space of generalized functions. 

Using a point process to approximate rainfall allows simplification for computation and exact formulas. For example, the autocorrelation function is known exactly in the case 
of point processes as shown in \cite{asn16}. Furthermore, point processes have been studied
extensively in the neural science literature \cite{sg13} and many statistics have been 
derived. 

The proofs shown here are revealing on their own. The Fokker-Planck derivation in Section \ref{sec:FP} shows the density for the moisture in the rain state tends to zero while
the flux term remains allowing for the ``teleporting'' boundary condition that arises for
limiting moisture process. For the convergence of paths of moisture shown in Theorem \ref{thm:qconv} the moisture process must first be decoupled into a moistening and precipitating process. Then the moistening process is shown to converge (Theorem \ref{thm:qconv}) 
while the precipitating process contains all of the discontinuities. Finally, the proof of convergence of 
the rain processes in Theorem \ref{thm:rain} gives estimates that would be useful for 
determining the error rates for using the point process approximation. 


\bigskip

{\it Acknowledgements.}
The research of
author Hottovy is partially supported by the National
Science Foundation under Grant DMS-1815061.

\bibliographystyle{apalike}
\bibliography{weather_bib}

\begin{thebibliography}{}

\bibitem[Abbott et~al., 2016]{asn16}
Abbott, T.~H., Stechmann, S.~N., and Neelin, J.~D. (2016).
\newblock Long temporal autocorrelations in tropical precipitation data and
  spike train prototypes.
\newblock {\em Geophysical Research Letters}, 43(21):11--472.

\bibitem[Albano et~al., 2008]{aetal08}
Albano, G., Giorno, V., Nobile, A.~G., and Ricciardi, L.~M. (2008).
\newblock Modeling refractoriness for stochastically driven single neurons.
\newblock {\em Scientiae Mathematicae Japonicae}, 67(2):173--190.

\bibitem[Bender and Orszag, 2013]{bo13}
Bender, C.~M. and Orszag, S.~A. (2013).
\newblock {\em Advanced mathematical methods for scientists and engineers I:
  Asymptotic methods and perturbation theory}.
\newblock Springer Science \& Business Media.

\bibitem[Bergemann et~al., 2017]{bkj17}
Bergemann, M., Khouider, B., and Jakob, C. (2017).
\newblock Coastal tropical convection in a stochastic modeling framework.
\newblock {\em Journal of Advances in Modeling Earth Systems}, 9(7):2561--2582.

\bibitem[Bernardara et~al., 2007]{bmr07}
Bernardara, P., De~Michele, C., and Rosso, R. (2007).
\newblock A simple model of rain in time: An alternating renewal process of wet
  and dry states with a fractional (non-gaussian) rain intensity.
\newblock {\em Atmospheric research}, 84(4):291--301.

\bibitem[Betts and Miller, 1986]{bm86}
Betts, A.~K. and Miller, M.~J. (1986).
\newblock A new convective adjustment scheme. {P}art {II}: Single column tests
  using {GATE} wave, {BOMEX}, {ATEX} and arctic airmass data sets.
\newblock {\em Q. J. Roy. Met. Soc.}, 112:693--709.

\bibitem[Bhat, 1994]{b94}
Bhat, V.~N. (1994).
\newblock Renewal approximations of the switched poisson processes and their
  applications to queueing systems.
\newblock {\em Journal of the Operational Research Society}, 45(3):345--353.

\bibitem[Cox, 1962]{c62}
Cox, D.~R. (1962).
\newblock {\em Renewal theory}.
\newblock Methuen, London.

\bibitem[Foufoula-Georgiou and Lettenmaier, 1987]{fl87}
Foufoula-Georgiou, E. and Lettenmaier, D.~P. (1987).
\newblock A markov renewal model for rainfall occurrences.
\newblock {\em Water Resour. Res.}, 23(5):875--884.

\bibitem[Frierson et~al., 2004]{fmp04}
Frierson, D.~M.~W., Majda, A.~J., and Pauluis, O.~M. (2004).
\newblock Large scale dynamics of precipitation fronts in the tropical
  atmosphere: a novel relaxation limit.
\newblock {\em Commun. Math. Sci.}, 2(4):591--626.

\bibitem[Green, 1964]{g64}
Green, J.~R. (1964).
\newblock A model for rainfall occurrence.
\newblock {\em J. Roy. Statist. Soc. Ser. B}, 26(2):345--353.

\bibitem[Hoeffding, 1994]{h94}
Hoeffding, W. (1994).
\newblock Probability inequalities for sums of bounded random variables.
\newblock In {\em The Collected Works of Wassily Hoeffding}, pages 409--426.
  Springer.

\bibitem[Hottovy and Stechmann, 2015]{hs15siap}
Hottovy, S.~A. and Stechmann, S.~N. (2015).
\newblock Threshold models for rainfall and convection: {D}eterministic versus
  stochastic triggers.
\newblock {\em SIAM J. Appl. Math.}, 75:861--884.

\bibitem[Jakubowski et~al., 1997]{j97}
Jakubowski, A. et~al. (1997).
\newblock A non-skorohod topology on the skorohod space.
\newblock {\em Electronic journal of probability}, 2.

\bibitem[Kurtz, 1991]{kurtz91}
Kurtz, T.~G. (1991).
\newblock Random time changes and convergence in distribution under the
  meyer-zheng conditions.
\newblock {\em The Annals of probability}, pages 1010--1034.

\bibitem[Lejay and Pigato, 2019]{lp19}
Lejay, A. and Pigato, P. (2019).
\newblock A threshold model for local volatility: evidence of leverage and mean
  reversion effects on historical data.
\newblock {\em International Journal of Theoretical and Applied Finance},
  22(04):1950017.

\bibitem[Lin and Neelin, 2000]{ln00}
Lin, J. and Neelin, J. (2000).
\newblock Influence of a stochastic moist convective parameterization on
  tropical climate variability.
\newblock {\em Geophys. Res. Lett.}, 27(22):3691--3694.

\bibitem[Sacerdote and Giraudo, 2013]{sg13}
Sacerdote, L. and Giraudo, M.~T. (2013).
\newblock Stochastic integrate and fire models: a review on mathematical
  methods and their applications.
\newblock In {\em Stochastic biomathematical models}, pages 99--148. Springer.

\bibitem[Schmitt et~al., 1998]{svb98}
Schmitt, F., Vannitsem, S., and Barbosa, A. (1998).
\newblock Modeling of rainfall time series using two-state renewal processes
  and multifractals.
\newblock {\em J. Geophys. Res.}, 103(D18):23181--23193.

\bibitem[Stechmann and Neelin, 2011]{sn11}
Stechmann, S.~N. and Neelin, J.~D. (2011).
\newblock A stochastic model for the transition to strong convection.
\newblock {\em J. Atmos. Sci.}, 68:2955--2970.

\bibitem[Stechmann and Neelin, 2014]{sn14}
Stechmann, S.~N. and Neelin, J.~D. (2014).
\newblock First-passage-time prototypes for precipitation statistics.
\newblock {\em J. Atmos. Sci.}, 71:3269--3291.

\bibitem[Suhas and Zhang, 2014]{sz14}
Suhas, E. and Zhang, G.~J. (2014).
\newblock Evaluation of trigger functions for convective parameterization
  schemes using observations.
\newblock {\em J. Climate}, page accepted.

\bibitem[Whitt, 1982]{w82}
Whitt, W. (1982).
\newblock Approximating a point process by a renewal process, i: Two basic
  methods.
\newblock {\em Operations Research}, 30(1):125--147.

\end{thebibliography}

\end{document}